\newcommand{\Z}{\mbox{Z}}
\newcommand{\F}{\mathbb{F}}
\newcommand{\T}{\mathrm{T}}
\newtheorem{thm}{Theorem}[section]
\newtheorem{lem}[thm]{Lemma}
\numberwithin{equation}{section}
\begin{document}

\title{Rings whose total graphs have genus at most one}

\author{Hamid Reza Maimani}
\address{Hamid Reza Maimani\\Department of Mathematics, Shahid Rajaee University, Tehran, Iran\\
and Institute for Research in Fundamental Sciences (IPM), Tehran
Iran.}

\author{Cameron Wickham}
\address{Cameron Wickham\\Mathematics Department, Missouri State University, Springfield, MO 65897, USA.}

\author{Siamak Yassemi}
\address{Siamak Yassemi\\Department of Mathematics,
University of Tehran, Tehran, Iran \\ and Institute for Research in
Fundamental Sciences (IPM), Tehran Iran.}

\thanks{The research of Hamid Maimani was in part supported by a grant from IPM (No. 88050214)}.
\thanks{The research of Siamak Yassemi was in part supported by a grant from IPM (No. 88130213)}.

\keywords{Total graph, genus, planar graph, toroidal graph}

\subjclass[2000]{05C75, 13A15.}

\begin{abstract}

Let $R$ be a commutative ring with $\Z(R)$ its set of zero-divisors.
In this paper, we study the total graph of $R$, denoted by
$\T(\Gamma(R))$. It is the (undirected) graph with all elements of $R$
as vertices, and for distinct $x, y\in R$, the vertices $x$ and $y$
are adjacent if and only if $x + y\in\Z(R)$. We investigate
properties of the total graph of $R$ and determine all isomorphism
classes of finite commutative rings whose total graph has genus at
most one (i.e., a planar or toroidal graph). In addition, it is shown
that, given a positive integer $g$, there are only finitely many
finite rings whose total graph has genus $g$.

\end{abstract}

\maketitle


\section*{Introduction}

Let $R$ be a commutative ring with non-zero unity. Let $\Z(R)$ be the
set of zero-divisors of $R$.  The concept of the graph of the zero
divisors of $R$ was first introduced by Beck \cite{B}, where he was
mainly interested in colorings. In his work all elements of the ring
were vertices of the graph. This investigation of colorings of a
commutative ring was then continued by D.~D.~Anderson and Naseer in
\cite{AN}. In
\cite{AL}, D.~F.~Anderson and Livingston associate a graph,
$\Gamma(R)$, to $R$ with vertices $\Z(R)\setminus\{0\}$, the set of
non-zero zero-divisors of $R$, and for distinct
$x,y\in\Z(R)\setminus\{0\}$, the vertices $x$ and $y$ are adjacent
if and only if $xy=0$.

An interesting question was proposed by Anderson, Frazier, Lauve,
and Livingston \cite{AFLL}: For which finite commutative rings $R$
is $\Gamma(R)$ planar. A partial answer was given in \cite{AMY}, but
the question remained open for local rings of order 32. In \cite{S}
and then independently in \cite{BC} and \cite{Wa} it is shown that
there is no ring of order 32 whose zero-divisor graph is planar.

The genus of a graph is the minimal integer $n$ such that the graph
can be drawn without crossing itself on a sphere with $n$ handles
(i.e. an oriented surface of genus $n$). Thus, a planar graph has
genus zero, because it can be drawn on a sphere without
self-crossing. In \cite{Wa} and \cite{Wi} the rings whose
zero-divisor graph has genus one are studied. A genus one graph is
called a toroidal graph. In other words, a graph $G$ is toroidal if it
can be embedded on the torus, that means, the graph's vertices can
be placed on a torus such that no edges cross. Usually, it is
assumed that $G$ is also non-planar. In \cite{wi2} it is shown that
for a positive integer $g$, there are only finitely many finite
rings whose zero-divisor graph has genus $g$.

In \cite{AB}, D.~F.~Anderson and Badawi introduced the total graph
of $R$, denoted by $\T(\Gamma(R))$, as the graph with all elements
of $R$ as vertices, and for distinct $x, y\in R$, the vertices $x$ and
$y$ are adjacent if and only if $x + y\in\Z(R)$.

In this paper, we investigate properties of the total graph of $R$
and determine all isomorphism classes of finite rings whose total
graph has genus at most one (i.e., a planar or toroidal graph).
In addition, we show that for a positive integer $g$, there are only
finitely many finite rings whose total graph has genus $g$.


\section{Main result}

A {\it complete graph} is a graph in which each pair of distinct
vertices is joined by an edge. We denote the complete graph with
$n$ vertices by $K_n$. A {\it bipartite graph} is a graph such that
its vertex set can be partitioned into two subsets $V_1$ and $V_2$
and each edge joins a vertex of $V_1$ to a vertex of $V_2$. A
\textit{complete bipartite graph} is a bipartite graph such that each vertex in $V_1$ is joined by an edge to each vertex in $V_2$, and is denoted by $K_{m,n}$ when $|V_1| = m$
and $|V_2| = n$. A {\it clique} of a graph is a maximal complete
subgraph. For a graph $G$, the {\it degree} of a vertex $v$ in $G$, denoted $\deg(v)$,
is the number of edges of $G$ incident with $v$. The number $\delta(G)=\min\{\deg(v)\;|\; v \mbox{ is a vertex of } G\}$ is the \textit{minimum degree} of $G$. For a nonnegative integer $k$, a graph is called {\it $k$-regular}
if every vertex has degree $k$. Recall that a graph
is said to be {\it connected} if for each pair of distinct vertices
$v$ and $w$, there is a finite sequence of distinct vertices
$v=v_1,\cdots,v_n=w$ such that each pair $\{v_i,v_{i+1}\}$ is an
edge. Such a sequence is said to be a \textit{path} and the \textit{distance}
$d(v,w)$ between connected vertices $v$ and $w$ is the length of
the shortest path connecting them.  For any graph $G$, the disjoint union of $k$ copies of $G$ is denoted $kG$.  Let $S$ be a nonempty subset of
vertex set of graph $G$. The \textit{subgraph induced by $S$} is the
subgraph with the  vertex set $S$ and with any edges whose endpoints
are both in the $S$ and is denoted by $\langle S\rangle$.

Let $G_1=(V_1,E_1)$ and $G_2=(V_2,E_2)$ be two
graphs with disjoint vertices set $V_i$ and edges set $E_i$. The
cartesian product of $G_1$ and $G_2$ is denoted by $G=G_1\times G_2$
with vertices set $V_1\times V_2$ and $(x,y)$ is adjacent to
$(x',y')$ if $x=x'$ and $y$ is adjacent $y'$ in $G_{2}$ or $y=y'$
and $x$ is adjacent to $x'$ in $G_{1 }$.

\begin{lem} Let $x$ be a vertex of $\T(\Gamma(R))$.  Then the degree of $x$ is either $|\Z(R)|$ or $|\Z(R)|-1$. In particular, if $2 \in \Z(R)$, then $\T(\Gamma(R))$ is a $(|\Z(R)|-1)$-regular graph.
\end{lem}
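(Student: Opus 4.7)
The plan is to count the neighbors of a fixed vertex $x$ by a change of variables. By definition, a neighbor of $x$ is an element $y\neq x$ with $x+y\in\Z(R)$. I would set $z=x+y$ so that $y=z-x$; the map $z\mapsto z-x$ is a bijection on $R$, and under it the condition $x+y\in\Z(R)$ becomes simply $z\in\Z(R)$. Thus the set $\{y\in R:\,x+y\in\Z(R)\}$ is in bijection with $\Z(R)$ and has exactly $|\Z(R)|$ elements.

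Next I would account for the requirement $y\neq x$. The value $y=x$ corresponds to $z=2x$. So $x$ itself is counted in the bijection above precisely when $2x\in\Z(R)$. This immediately splits into two cases: if $2x\in\Z(R)$, then one must subtract one and $\deg(x)=|\Z(R)|-1$; if $2x\notin\Z(R)$, no correction is needed and $\deg(x)=|\Z(R)|$. This proves the first assertion.

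For the "in particular" statement, I would verify that $2\in\Z(R)$ forces $2x\in\Z(R)$ for every $x\in R$. Indeed, pick $a\neq0$ with $2a=0$; then $(2x)a=x(2a)=0$ by commutativity, and since $a\neq0$ this shows $2x\in\Z(R)$ (even when $2x=0$, it still lies in $\Z(R)$). Applying the previous case analysis, every vertex then has degree $|\Z(R)|-1$, so $\T(\Gamma(R))$ is $(|\Z(R)|-1)$-regular.

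I do not anticipate a real obstacle: the main content is just the bijection $y\leftrightarrow x+y$ together with the observation that $\Z(R)$ absorbs multiplication by $2$ once $2$ is itself a zero-divisor. The only point worth stating carefully is the justification that $2\in\Z(R)$ implies $2x\in\Z(R)$ for all $x$, since $\Z(R)$ is not in general an ideal — this is where commutativity of $R$ enters the argument.
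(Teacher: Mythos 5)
Your proof is correct and follows essentially the same route as the paper: parametrize the potential neighbors of $x$ as $a-x$ for $a\in\Z(R)$ and split into the cases $2x\in\Z(R)$ and $2x\notin\Z(R)$. The one place you go beyond the paper is in explicitly justifying that $2\in\Z(R)$ forces $2x\in\Z(R)$ for every $x$ (via an element $a\neq 0$ with $2a=0$), a step the paper leaves implicit; this is a worthwhile addition since $\Z(R)$ need not be an ideal.
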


\begin{proof}  If $x$ is adjacent to $y$, then $x + y = a \in \Z(R)$ and hence $y = a - x$ for some $a \in \Z(R)$. We have two cases:
\\
\noindent \emph{Case 1}. Suppose $2x \in \Z(R)$.  Then $x$ is adjacent to $a - x$ for any $a \in \Z(R) \setminus \{2x\}$. Thus the degree of $x$ is $|\Z(R)|-1$. In particular, if $2 \in \Z(R)$, then $\T(\Gamma(R))$ is a $(|\Z(R)| - 1)$-regular graph.
\\
\noindent \emph{Case 2}. Suppose $2x \notin \Z(R)$.  Then $x$ is adjacent to $a-x$ for any $a \in \Z(R)$.  Thus the degree of $x$ is $|Z(R)|$. \end{proof}

Let $S_{k}$ denote the sphere with $k$ handles, where $k$ is a
non-negative integer, that is, $S_{k}$ is an oriented surface of
genus $k$.
 The genus of a graph $G$, denoted by $\gamma(G)$, is the minimum integer $n$ such that
the graph can embedded in $S_{n}$. A graph $G$ is called a planar if $\gamma(\Gamma(G))=0$, and toroidal if $\gamma(\Gamma(G))=1$.
We note here that if $H$ is a subgraph of a graph $G$, then $\gamma (H)\leq \gamma(G)$.

In the following theorem we bring some well-known formulas, see,
e.g., \cite{W} and \cite{W2}:

\begin{thm}\label{genera_bounds} The following statements hold:

\begin{itemize}

\item[(a)] For $n\geq 3$ we have $\gamma(K_{n})= \left\lceil \frac{(n-3)(n-4)}{12} \right\rceil$.

\item[(b)] For $m,n\geq 2$ we have $\gamma(K_{m,n})=\left\lceil \frac{(m-2)(n-2)}{4} \right\rceil$.

\item[(c)] Let $G_1$ and $G_2$ be two graphs and for each $i$, $p_{i}$ be the number of vertices of $G_{i}$.
Then $max \{p_{1}\gamma(G_{2})+ \gamma(G_{1}),
p_{2}\gamma(G_{1})+\gamma(G_{2})\}\leq \gamma(G_{1}\times G_{2}).$

\end{itemize}

\end{thm}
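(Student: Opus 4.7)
The plan is to treat the three parts separately, noting that (a) and (b) are equalities while (c) is only a lower bound. For part (a), suppose $K_n$ embeds in $S_k$. Then Euler's formula gives $V - E + F = 2 - 2k$ with $V = n$ and $E = \binom{n}{2}$. Since $K_n$ has girth $3$, every face is bounded by at least three edges, so $2E \geq 3F$, that is, $F \leq n(n-1)/3$; substituting yields $k \geq (n-3)(n-4)/12$, and the ceiling follows because $k$ is an integer. The matching upper bound is the celebrated Ringel--Youngs theorem, whose proof is genuinely difficult and proceeds by producing explicit triangular embeddings via current and voltage graph constructions on a case-by-case basis according to $n \pmod{12}$; I would simply invoke \cite{W}.

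Part (b) is analogous. For $K_{m,n}$ embedded in $S_k$ we use that $K_{m,n}$ is bipartite, hence has girth at least $4$, so $2E \geq 4F$ with $E = mn$. Plugging into Euler's formula gives $V - E + F \leq (m+n) - mn/2$ and, after rearrangement, $k \geq (m-2)(n-2)/4$. The matching upper bound again comes from an explicit construction due to Ringel, which is classical, and I would cite \cite{W} here as well.

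Part (c) is a pure lower bound, and a naive Euler-formula count alone does not suffice because nothing in the product graph forces large girth. The standard argument is more topological: after fixing an embedding of $G_1 \times G_2$ in a surface of minimum genus, one identifies the $p_1$ vertex-disjoint ``columns'' inducing copies of $G_2$, each of which must itself embed in the ambient surface with at least $\gamma(G_2)$ of the handles genuinely allocated to it, while the ``row'' copies of $G_1$ joining corresponding vertices across columns demand an additional $\gamma(G_1)$ handles to be embeddable. A careful bookkeeping argument then yields $\gamma(G_1 \times G_2) \geq p_1 \gamma(G_2) + \gamma(G_1)$, and the symmetric roles of $G_1$ and $G_2$ give the other term in the maximum. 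The main obstacle will be making the ``handle allocation'' argument rigorous---naively, handles used by different subgraphs could overlap and be double-counted---which is exactly the point at which I would defer to the reference \cite{W2}.
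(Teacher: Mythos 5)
Your proposal is correct and matches the paper's treatment: the paper gives no proof at all, simply quoting these as well-known formulas from \cite{W} and \cite{W2}, which are exactly the sources you defer to for the Ringel--Youngs upper bounds in (a), Ringel's construction in (b), and White's Cartesian-product lower bound in (c). The Euler-formula computations you supply for the lower bounds in (a) and (b) are standard and correct.
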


According to Theorem \ref{genera_bounds} we have  $\gamma(K_{n})=0$ for $1\leq n\leq
4$ and $\gamma(K_{n})=1$ for $5\leq n\leq 7$ and for other value of
$n$, $\gamma(K_{n})\geq 2.$

\begin{lem}\label{Z2ProductTotalGraph} Let $\F_q$ denote the field with $q$ elements.  Then the total graph of $\F_2\times \F_q$ is isomorphic to $K_2 \times K_q$.  Furthermore, for any positive integer $m$ and $q>2$,
$$
\gamma(T(\Gamma(\F_{2^m} \times \F_q))) \geq 2^m\left\lceil \frac{(q-3)(q-4)}{12}\right\rceil.
$$
\end{lem}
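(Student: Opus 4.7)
My strategy is to handle the two claims separately.

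For the graph isomorphism in the first claim, I first compute $\Z(\F_2 \times \F_q) = \{(0,b)\mid b\in\F_q\}\cup\{(1,0)\}$, so that two distinct vertices $(a_1,b_1),(a_2,b_2)$ are adjacent in the total graph if and only if $a_1=a_2$ (since $\F_2$ has characteristic $2$) or $b_2=-b_1$. The key trick is to twist the second coordinate by the parity of $a$: I would define $\phi(a,b)=(a,(-1)^a b)$ as a bijection $\F_2\times\F_q\to V(K_2)\times V(K_q)$. A short case analysis, splitting on whether $a_1=a_2$ or not, then verifies that $\phi$ converts the total-graph adjacency into the cartesian-product adjacency of $K_2\times K_q$ (same first coordinate and different second, or vice versa). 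When $q$ is a power of $2$ the sign twist collapses to the identity and the two adjacency conditions already coincide.

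For the genus lower bound, the central observation is that for each fixed $a\in\F_{2^m}$ the fiber $\{a\}\times\F_q$ induces a copy of $K_q$ in the total graph: for any distinct $b,b'\in\F_q$ the sum $(a,b)+(a,b')=(0,b+b')$ lies in $\Z(\F_{2^m}\times\F_q)$ because its first coordinate vanishes. The $2^m$ fibers are pairwise vertex-disjoint, so $\T(\Gamma(\F_{2^m}\times\F_q))$ contains $2^m K_q$ as a subgraph. Invoking the additivity of genus over vertex-disjoint components (a classical result of Battle--Harary--Kodama--Youngs) together with Theorem~\ref{genera_bounds}(a), I conclude
\[
\gamma\bigl(\T(\Gamma(\F_{2^m}\times\F_q))\bigr)\geq \gamma(2^m K_q)=2^m\gamma(K_q)=2^m\left\lceil\frac{(q-3)(q-4)}{12}\right\rceil.
\]

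The main obstacle is discovering the correct bijection $\phi$ in the first claim: when $q$ is odd, the adjacency across the two fibers in the total graph is reflected by $b\mapsto -b$ compared to what $K_2\times K_q$ demands, and the twist $\phi(a,b)=(a,(-1)^a b)$ is precisely what undoes this. Once the isomorphism is in hand, the lower bound on the genus follows almost mechanically from the disjoint $K_q$ structure sitting inside the total graph.
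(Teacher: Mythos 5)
Your proof is correct, and while the first claim is handled essentially as in the paper, your argument for the genus bound takes a genuinely different route. For the isomorphism $\T(\Gamma(\F_2\times\F_q))\cong K_2\times K_q$, your twist $\phi(a,b)=(a,(-1)^a b)$ is exactly the paper's identification of $(1,-y)$ with $(1,y)$, so that part is the same. For the lower bound, the paper keeps track of the cross edges between fibers: it shows $\T(\Gamma(\F_{2^m}\times\F_q))$ contains $K_2\times\T(\Gamma(\F_{2^{m-1}}\times\F_q))$ and runs an induction on $m$ using White's inequality $\gamma(G_1\times G_2)\geq p_1\gamma(G_2)+\gamma(G_1)$ from Theorem~\ref{genera_bounds}(c). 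You instead discard the cross edges entirely: each fiber $\{a\}\times\F_q$ is a clique because $(a,b)+(a,b')=(0,b+b')\in\Z(R)$ in characteristic $2$, so $2^m K_q$ sits inside the total graph as a vertex-disjoint union, and the additivity of genus over components (Battle--Harary--Kodama--Youngs) gives $\gamma(2^m K_q)=2^m\gamma(K_q)$ directly. Both arguments land on the same bound $2^m\lceil(q-3)(q-4)/12\rceil$; yours is shorter and avoids both the induction and the Cartesian-product genus inequality, at the cost of importing the additivity theorem, which the paper never cites. The only point worth making explicit in a final write-up is the statement and reference for that additivity result, since subgraph monotonicity of the genus alone (which the paper does record) is not enough to pass from $2^m$ disjoint copies of $K_q$ to the factor of $2^m$.
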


\begin{proof}  We induct on $m$.  If $m=1$, let $V_0 = \{ (0,y) | y \in \F_q \}$ and let $V_1 = \{ (1,y) | y \in \F_q \}$.  Then the subgraphs $G_i$ of the total graph of $\F_2 \times \F_q$ induced by each of the $V_i$ is $K_q$.  Now for each $y \in \F_q$, there is an edge between $(0, y) \in G_0$ and $(1, -y) \in G_1$.  Furthermore, these are the only other edges in the total graph.  Identifying $(1, -y)$ with $(1, y)$, we can replace $G_1$ with an isomorphic copy $G'_1$; under this isomorphism the edge between $(0, y) \in G_0$ and $(1, -y) \in G_1$ is the edge between $(0, y) \in G_0$ and $(1, y) \in G'_1$.  Thus the total graph of $\F_2 \times \F_q$ has vertex set $\{ (x, y) | x \in \F_2 \mbox{ and } y \in \F_q \}$, with an edge between $(x,y)$ and $(x',y')$ if $x = x'$  and $y \neq y'$, or $y = y'$ and $x \neq x'$.  That is, it is the graph $K_2 \times K_q$.  Parts (a) and (c) of Theorem \ref{genera_bounds} now yield
$$
\gamma(T(\Gamma(\F_2 \times \F_q))) = \gamma(K_2 \times K_q) \geq 2\left\lceil \frac{(q-3)(q-4)}{12}\right\rceil.
$$
If $m>1$, we can partition $\F_{2^m}$ into two sets, $S_1$ and $S_2$, each of cardinality $2^{m-1}$; let $f$ be a bijection from $S_1$ to $S_2$.  Since each element of a field of characteristic $2$ is its own inverse, then the subgraph of $T(\Gamma(\F_{2^m} \times \F_q))$ induced by $S_i \times \F_q$ is isomorphic to $T(\Gamma(\F_{2^{m-1}} \times \F_q))$.  For any $y \in \F_q$ and $s\in S_1$, the element $(s,y)$ is adjacent to $(f(s),-y)$.  We thus have a copy of $K_2 \times T(\Gamma(\F_{2^{m-1}} \times \F_q))$ as a subgraph of $T(\Gamma(\F_{2^m} \times \F_q))$.  Part (c) of Theorem \ref{genera_bounds} and the induction hypothesis now yield
\begin{eqnarray*}
\gamma(T(\Gamma(\F_{2^m} \times \F_q))) &\geq& \gamma(K_2 \times T(\Gamma(\F_{2^{m-1}} \times \F_q))) \\
 &\geq& 2\gamma(T(\Gamma(\F_{2^{m-1}} \times \F_q))) \\
 &\geq& 2^m\left\lceil \frac{(q-3)(q-4)}{12}\right\rceil.
\end{eqnarray*}\end{proof}

A {\it subdivision} of a graph is a graph obtained from it by replacing
edges with pairwise internally disjoint paths. A remarkably simple
characterization of planar graphs was given by Kuratowski in 1930.
Kuratowski Theorem says that a graph is planar if and only if it
contains no subdivision of $K_5$ or $K_{3,3}$ (see [2, p. 153]). In
addition, every planar graph has a vertex $v$ such that $deg(v)\leq
5$.

\begin{thm}
For any positive integer $g$, There are finitely many finite rings $R$ whose total graph has genus $g$.
\end{thm}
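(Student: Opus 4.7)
The plan is to bound $|R|$ by a function of $g$; combined with the finiteness of isomorphism classes of finite rings of any given order, this yields the conclusion. I will write $R=R_1\times\cdots\times R_n$ as a product of finite local rings, where $R_i$ has maximal ideal $\fm_i$ and residue field of order $q_i$, and split according to whether $n\ge 2$ or $n=1$.

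Suppose first $n\ge 2$. For each index $i$, set $V_i:=R_1\times\cdots\times\{0\}\times\cdots\times R_n$ with the zero placed in the $i$-th factor, so $|V_i|=|R|/|R_i|$. Any two distinct $x,y\in V_i$ satisfy $(x+y)_i=0\in\fm_i$, hence $x+y\in\Z(R)$; thus $V_i$ induces a copy of $K_{|R|/|R_i|}$ in $\T(\Gamma(R))$. Consequently $\gamma(K_{|R|/|R_i|})\le g$, and Theorem~\ref{genera_bounds}(a) forces $|R|/|R_i|\le N_0(g)$ for an explicit constant $N_0(g)$ depending only on $g$. Multiplying the resulting inequalities $|R_i|\ge|R|/N_0(g)$ over $i$ yields $|R|\ge(|R|/N_0(g))^n$, i.e.\ $|R|\le N_0(g)^{n/(n-1)}\le N_0(g)^2$.

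Now suppose $n=1$, so $R$ is local with maximal ideal $\fm$. If $\fm=0$ then $R$ is a field and $\T(\Gamma(R))$ consists of an isolated vertex at $0$ together with matching edges $\{x,-x\}$, hence is planar, so $\gamma=0\ne g$. Otherwise $\fm\ne 0$, and the first lemma of this section gives $\delta(\T(\Gamma(R)))\ge|\Z(R)|-1=|\fm|-1$, so $|E(\T(\Gamma(R)))|\ge|R|(|\fm|-1)/2$. Combining this with the standard Euler-formula bound $|E|\le 3|V|-6+6g$ for a simple graph of genus at most $g$ (with $|V|\ge 3$) produces an explicit bound $|\fm|\le C(g)$ depending only on $g$. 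Since each quotient $\fm^k/\fm^{k+1}$ is a vector space over $R/\fm=\F_{q_1}$, the order $|\fm|$ is a power of $q_1$, and hence $q_1\le|\fm|\le C(g)$, giving $|R|=q_1|\fm|\le C(g)^2$. Combining the two cases bounds $|R|$ by a function of $g$. The main subtlety is that the field subcase must be excluded by hand via the planarity observation, since fields of arbitrarily large order all have planar total graph and would otherwise defeat any purely subgraph- or degree-based bound.
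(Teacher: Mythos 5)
Your proof is correct. In the non-local case it is essentially the paper's argument: both observe that a coordinate ideal (a copy of $R$ with one factor zeroed out) induces a complete subgraph, so its size is bounded via the genus formula for $K_n$; the paper uses only the single largest such ideal $R_1^{*}=\{0\}\times R_2\times\cdots\times R_n$ together with $|R|\le|R_1^{*}|^2$, while you multiply the bounds from all $n$ coordinates to get $|R|^{n-1}\le N_0(g)^n$ --- same idea, same conclusion. In the local case you take a genuinely different route: the paper again produces a complete subgraph, namely the one induced by the ideal $\Z(R)=\fm$ (any two of its elements sum into $\fm\su\Z(R)$), and concludes from $\gamma(K_{|\fm|})\le g$ and $|R|\le|\fm|^2$; you instead bound $|\fm|$ through the minimum degree $\delta\ge|\fm|-1$ and the Euler-formula inequality $|E|\le 3|V|-6+6g$. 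Both yield a bound on $|\fm|$ of the same order; your route avoids the Ringel--Youngs formula here and is the same device the paper itself deploys in Theorem \ref{ToroidalRings}. You are also right to dispose of the field case explicitly: the paper's blanket assertion that $|R|\le|\Z(R)|^2$ for local $R$ is literally false for fields of order greater than one, and is only harmless because a field's total graph is a planar matching, a point the paper leaves implicit. The one step deserving an extra sentence in your write-up is the Euler bound: $|E|\le 3|V|-6+6g$ is usually stated for connected graphs, whereas the total graph of a local ring with $2\in\Z(R)$ is a disjoint union of complete graphs; the inequality persists because genus is additive over components, but that should be said.
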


\begin{proof}
Let $R$ be a finite ring. If $R$ is local, then $Z(R)$ is the maximal ideal of $R$ and $|R|\leq |Z(R)|^2$.  If $R$ is not local, then $R= R_{1}\times R_{2}\times \cdots
\times R_{n}$ where each of the $R_{i}$'s is a local ring and $n\geq 2$ \cite{MC}.
Suppose that $|R_{1}|\leq |R_{2}|\leq \cdots \leq |R_{n}|$ and set
$R_{1}^{*}={0}\times R_{2}\times \cdots R_{n}$. Since $|R|=
|R_{1}||R_{1}^{*}|$, we conclude that $|R|\leq |R_{1}^{*}|^{2}$. Let $S$ denote either $Z(R)$ if $R$ is local or $R_1^*$ if $R$ is not local.  Then
every pair of elements of $S$ are adjacent in
$\T(\Gamma(R))$ and hence we have a complete graph $K_{|S|}$
in the structure of $\T(\Gamma(R))$. This implies that
$\gamma (K_{|S|}) \leq g$. Therefore $\lceil
((|S|-3)(|S|-4))/12\rceil \leq g$ and so
$|S|\leq \left(7 +\sqrt{49 + 48(g - 1)}\right)/2$, and hence $|R|\leq \left(\left(7+\sqrt{49 + 48(g - 1)}\right)/2\right)^{2}.$
\end{proof}

\begin{thm}\label{PlanarRings}
Let $R$ be a finite ring such that $\T(\Gamma (R))$ is planar. Then the
following hold:

\begin{itemize}

\item[(a)] If $R$ is local ring, then $R$ is a field or $R$
is isomorphic to the one of the $9$ following rings:
$$\mathbb{Z}_{4}, \mathbb{Z}_{2}[X]/(X^{2}), \mathbb{Z}_{2}[X]/(X^{3}),
\mathbb{Z}_{2}[X,Y]/(X,Y)^{2}, \mathbb{Z}_{4}[X]/(2X,X^{2}),$$
$$\mathbb{Z}_{4}[X]/(2X,X^{2}-2), \mathbb{Z}_{8}, \F_4[X](X^{2}), \mathbb{Z}_{4}[X]/(X^{2}+X+1).$$

\item[(b)] If $R$ is not local ring, then $R$ is an infinite integral
domain or $R$ is isomorphic to $\mathbb{Z}_2 \times \mathbb{Z}_2$ or $\mathbb{Z}_6$.

\end{itemize}

\end{thm}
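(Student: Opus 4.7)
The proof splits along the local/non-local dichotomy, with the core leverage in each case being that certain additively closed subsets of $R$ force large cliques in $\T(\Gamma(R))$.

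For part (a), suppose $R$ is local with maximal ideal $\fm=\Z(R)$. Since $\fm$ is closed under addition, its vertex set spans $K_{|\fm|}$, and planarity forces $|\fm|\leq 4$. I would then split on the residue characteristic $p=\mathrm{char}(R/\fm)$. If $p\neq 2$ and $R$ is not a field, pick $a\notin\fm$; the cosets $a+\fm$ and $-a+\fm$ are distinct, and every vertex of one is adjacent to every vertex of the other, giving a $K_{|\fm|,|\fm|}$ subgraph. Because $|\fm|$ is a positive power of $p\geq 3$, we have $|\fm|\geq 3$, so this contains $K_{3,3}$, contradicting planarity. If $p=2$, then $-a\equiv a\pmod{\fm}$, adjacencies are confined to within a single coset, and $\T(\Gamma(R))$ is the disjoint union of $|R/\fm|$ copies of $K_{|\fm|}$, planar iff $|\fm|\leq 4$. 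Nakayama then gives $|\fm|\geq |R/\fm|$ (as $\fm/\fm^{2}$ is a nonzero $R/\fm$-vector space), so $(|\fm|,|R/\fm|)\in\{(2,2),(4,2),(4,4)\}$ and $|R|\in\{4,8,16\}$. The classical classification of finite local rings with these Hilbert invariants (2, 5, and 2 rings respectively) yields exactly the nine listed rings, each with a manifestly planar total graph.

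For part (b), write $R=R_1\times\cdots\times R_n$ with each $R_i$ local, $n\geq 2$, and $|R_1|\leq\cdots\leq|R_n|$. The ideal $R_1^{*}=\{0\}\times R_2\times\cdots\times R_n$ lies in $\Z(R)$ and is additively closed, so it induces $K_{|R_1^{*}|}$, forcing $\prod_{i\geq 2}|R_i|\leq 4$ and hence $n\leq 3$. The case $n=3$ collapses to $R\cong\F_2^{3}$, ruled out by the edge count $24>3|R|-6=18$. For $n=2$ I would walk through the finite list of candidates: $\F_2\times\F_2$ has total graph $C_4$ and $\mathbb{Z}_6\cong\F_2\times\F_3$ has total graph $K_2\times K_3$ (the triangular prism) by Lemma~\ref{Z2ProductTotalGraph}, both planar. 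Both $\F_3\times\F_3$ and $\F_3\times\F_4$ are killed by exhibiting a $K_{3,3}$ of the form $\{1\}\times S$ against $\{2\}\times S$ for a three-element subset $S\subseteq R_2$, since all pairwise sums lie in $\{0\}\times R_2\subseteq\Z(R)$. Every remaining candidate with $|R_2|=4$---namely $R_2\in\{\mathbb{Z}_4,\F_2[X]/(X^{2})\}$ or $|R_1|=4$---has $|\Z(R)|$ large enough that Lemma 1.1 (minimum degree at least $|\Z(R)|-1$) forces more edges than $3|R|-6$, killing planarity.

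The one delicate case is $R=\F_2\times\F_4$, where Lemma~\ref{Z2ProductTotalGraph} identifies the total graph with $K_2\times K_4$, a graph on $8$ vertices and $16$ edges that passes both the Euler bound and the genus estimate of that lemma (the latter being trivial at $q=4$). I would dispatch it by a minor argument: contracting one matching edge yields the triangular prism $K_2\times K_3$ together with a new vertex of degree $6$ joined to all six prism vertices. The triangular prism is $3$-connected, so its planar embedding is unique (Whitney) and has all faces of size at most $4$; no face of the prism therefore contains all six vertices on its boundary, so the degree-$6$ apex cannot be inserted planarly and $K_2\times K_4$ is non-planar. This ad hoc minor check is the main obstacle, since the uniform numerical tools used elsewhere fall just short in this single case.
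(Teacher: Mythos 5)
Your proof is correct, and its overall skeleton (reduce to a finite list of candidate rings, then certify each as planar or non-planar) matches the paper's, but several of your individual steps are genuinely different. The paper gets its finiteness from the minimum-degree bound $\delta\leq 5$ for planar graphs combined with Lemma 1.1, yielding $|\Z(R)|\leq 6$; you instead use the clique $K_{|\Z(R)\cap I|}$ on an additively closed subset ($\fm$ in the local case, $R_1^*$ in the non-local case), which immediately discards $\mathbb{Z}_2\times\mathbb{Z}_5$ but admits candidates such as $\mathbb{Z}_4\times\mathbb{Z}_4$ that the paper never has to consider. In part (a) you re-derive inline the Anderson--Badawi decomposition ($\T(\Gamma(R))\cong mK_n$ when $2\in\Z(R)$, and a $K_{n,n}$ between the cosets $a+\fm$ and $-a+\fm$ when $2\notin\Z(R)$) that the paper simply cites, and your Nakayama bound $|\fm|\geq|R/\fm|$ plays the role of the paper's $|R|\leq|\Z(R)|^2$; both reach $(|\fm|,|R/\fm|)\in\{(2,2),(4,2),(4,4)\}$ and then lean on the Corbas--Williams/Redmond classification. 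The largest divergence is in part (b): the paper disposes of all rings $\mathbb{Z}_2\times R$ uniformly with an explicit subdivision of $K_{|R|+1}$ (the edge $(1,0)$--$(0,0)$ plus the paths $(1,0)$--$(1,-r)$--$(0,r)$), whereas you use the Euler edge count $e\leq 3v-6$ for most candidates and are then forced into the ad hoc contraction/Whitney argument for $\mathbb{Z}_2\times\F_4$, where the edge count is not tight enough. Your minor argument is valid (the contracted graph is the triangular prism plus a vertex adjacent to all six prism vertices, and no face of the prism's unique embedding has six boundary vertices), but the paper's $K_5$-subdivision is shorter and handles $\F_4$, $\mathbb{Z}_4$, $\mathbb{Z}_2[X]/(X^2)$, and $\mathbb{Z}_5$ in one stroke; on the other hand your edge-counting disposes of cases like $\mathbb{Z}_4\times\mathbb{Z}_4$ that your weaker initial bound lets through. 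Your treatment of $n=3$ (forcing $\F_2^3$, killed by $24>18$) and of $\F_3\times\F_3$, $\F_3\times\F_4$ via $K_{3,3}$ agrees with the paper in substance.
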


\begin{proof}
Any planar graph has a vertex $v$ with $\deg(v)\leq 5$. So if the total graph of $R$ is planar, then $\delta (\T(\Gamma(R)))\leq 5$.  By Lemma 1.1, $\delta(\T(\Gamma (R)))=|Z(R)|$ or $|Z(R)|-1$, and hence $|Z(R)|\leq 6$.

{\rm (a)} Assume that $R$ is a local ring and let $n=|Z(R)|$ and $m=|R/Z(R)|$. If $2\in Z(R)$ then $\T(\Gamma(R))\cong m K_n$ (\cite[Theorems 2.1 and 2.2]{AB}).  Hence $|Z(R)| \leq 4$. Also, $|R| =2^k$ since $2\in Z(R)$. So $|R|=16, 8, 4,$ or $2$.  According to Corbas and Williams \cite{cb} there are two non-isomorphic rings of order $16$ with maximal ideals of order $4$, namely $\F_4[x]/(x^2)$ and $\mathbb{Z}_4[x]/(x^2+x+1)$ (see also Redmond \cite{R}), so for these rings we have $\T(\Gamma (R))\cong 4K_4$.  Since $K_4$ is planar we conclude that the total graphs of these
rings are planar.  In \cite{cb} it is also shown that there are $5$ local rings of order $8$ (except $F_{8}$). In all of these rings we have $|Z(R)| =4$ and hence $\T(\Gamma (R))\cong 2K_4$.  Also, there are two non-isomorphic local rings of order 4; these are $\mathbb{Z}_{4}$ and $\mathbb{Z}_{2}[X]/(X^{2})$.  For both we have $\T(\Gamma (R))\cong 2K_2$ and thus they are planar. Note that if $|Z(R)|=1$, then $R$ is a field and hence the total graph is planar. If $2\notin Z(R)$, then $\T(\Gamma(R))\cong K_n \cup \left(\frac{m-1}{2}\right) K_{n,n}$ (\cite[Theorem 2.2]{AB}).
This implies $n \leq 2$, and thus $R$ either has order $4$ or is a field.

{\rm (b)} Suppose that $R$ is not local ring. Since $R$ is finite,
then there are finite local rings $R_{i}$ such that $R=R_{1}\times
\cdots \times R_{t}$ where $t\geq 2$. Since $|Z(R)| \leq 6$
then we have the following candidates: $$\mathbb{Z}_2 \times
\mathbb{Z}_2, \mathbb{Z}_6, \mathbb{Z}_{2}\times \F_4, \mathbb{Z}_{2}\times \mathbb{Z}_{4},  \mathbb{Z}_{2}\times \mathbb{Z}_{2}[X]/(X^{2}), $$
$$\mathbb{Z}_{2}\times \mathbb{Z}_{5}, \mathbb{Z}_{3}\times
\mathbb{Z}_{3}, \mathbb{Z}_{3}\times
\F_4.$$ The total graph of $\mathbb{Z}_2
\times \mathbb{Z}_2$ is isomorphic to the cycle $C_{4}$, and this graph
is planar. By Lemma \ref{Z2ProductTotalGraph}, the total graph of $\mathbb{Z}_6 \cong \mathbb{Z}_2 \times \mathbb{Z}_3$ is isomorphic to
$K_{2}\times K_{3}$, which is also planar.

Let $R$ be a ring with $|R|=n$.  The subgraph of the total graph of $\mathbb{Z}_{2}\times R$ induced by the set $\{0\} \times R$ is a copy of $K_n$.  The edge $(1,0) - (0,0)$, together with the paths $(1,0) - (1,-r) - (0,r)$ for each $r\in R$ yield a subdivision of $K_{n+1}$ in the total graph of $\mathbb{Z}_{2}\times R$.  Thus the total graphs of $\mathbb{Z}_{2}\times \F_4$, $\mathbb{Z}_{2}\times \mathbb{Z}_4$, $\mathbb{Z}_{2}\times \Bbb{Z}_{2}[X]/(X^{2})$, and $\mathbb{Z}_{2}\times \mathbb{Z}_5$ are not planar.  Also, the total graph of $\mathbb{Z}_{3}\times R$ contains a subgraph which is isomorphic to $K_{3,n}$ (consider the induced subgraph $\langle S \rangle$ where $S=\{(1,r)\:|\: r\in R\} \cup \{(2,r)\:|\: r\in R \}$).  Thus the total graphs of $\mathbb{Z}_{3}\times \mathbb{Z}_3$ and $\mathbb{Z}_{3}\times \mathbb{F}_4$ are not planar.
\end{proof}

\begin{thm}\label{ToroidalRings}
Let $R$ be a finite ring such that $\T(\Gamma (R))$ is toroidal. Then the following statements hold:

\begin{itemize}

\item[(a)]  If R is local ring, then $R$ is isomorphic to $\Bbb{Z}_9$, or $\Bbb{Z}_{3}[X]/(X^{2})$.

\item[\rm (b)] If $R$ is not local ring, then $R$ is isomorphic to one of the following rings:
$$\Bbb{Z}_{2}\times \F_{4}, \Bbb{Z}_{3}\times \Bbb{Z}_{3}, \Bbb{Z}_{2}\times \Bbb{Z}_{4}, \Bbb{Z}_{2}\times \mathbb{Z}_{2}[x]/(x^2), \Bbb{Z}_{2}\times \Bbb{Z}_{2}\times \Bbb{Z}_{2}$$

\end{itemize}

\end{thm}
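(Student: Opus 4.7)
The plan is to mirror the blueprint of Theorem \ref{PlanarRings}, replacing the planar bound $\gamma(K_n)=0$ with the toroidal bound $\gamma(K_n)\le 1$, which by Theorem \ref{genera_bounds}(a) forces $n\le 7$. So in both the local and non-local cases, one first derives a sharpened version of the $|Z(R)|\le 6$ step used in the planar theorem, then does a finer case analysis.

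For part (a), set $n=|Z(R)|$ and $m=|R/Z(R)|$. Since $Z(R)$ is the maximal ideal it induces a $K_n$ in $\T(\Gamma(R))$, giving $n\le 7$. I would then invoke the Anderson-Badawi structure theorem \cite[Theorems 2.1 and 2.2]{AB}: $\T(\Gamma(R))\cong mK_n$ when $2\in Z(R)$ and $\T(\Gamma(R))\cong K_n\cup\tfrac{m-1}{2}K_{n,n}$ otherwise, together with the fact that the genus of a disjoint union is the sum of the genera of its components. In the first subcase $|R|$ is a power of $2$, so $n\in\{1,2,4\}$ and $\T(\Gamma(R))$ is planar. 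In the second $|R|=p^k$ for an odd prime $p$, so $n\in\{1,3,5,7\}$: $n=1$ gives a planar matching; $n=5$ gives $\gamma\ge\gamma(K_{5,5})=3$; $n=7$ gives $\gamma\ge\gamma(K_{7,7})=7$; and $n=3$ yields $\gamma=(m-1)/2$, which is $1$ iff $m=3$ and $|R|=9$. The local rings of order $9$ are $\Bbb{Z}_9$ and $\F_3[X]/(X^2)$.

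For part (b), write $R=R_1\times\cdots\times R_t$ with $|R_1|\le\cdots\le|R_t|$. The ideal $R_1^*=\{0\}\times R_2\times\cdots\times R_t$ lies in $Z(R)$, so its induced subgraph is $K_{|R_1^*|}$ and hence $|R_1^*|\le 7$. The inequality $|R_1|^{t-1}\le|R_1^*|$ then forces $t\in\{2,3\}$ and yields a finite explicit list of candidates: for $t=3$ only $\Bbb{Z}_2\times\Bbb{Z}_2\times\Bbb{Z}_2$ and $\Bbb{Z}_2\times\Bbb{Z}_2\times\Bbb{Z}_3$ survive, and for $t=2$ each factor is a local ring of order at most $7$ drawn from $\{\Bbb{Z}_2,\Bbb{Z}_3,\Bbb{Z}_4,\F_4,\F_2[X]/(X^2),\Bbb{Z}_5,\Bbb{Z}_7\}$. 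The planar rings $\Bbb{Z}_2\times\Bbb{Z}_2$ and $\Bbb{Z}_6$ of Theorem \ref{PlanarRings} are excluded automatically. Whole further families can be killed at once: Lemma \ref{Z2ProductTotalGraph} gives $\gamma\ge 2$ for $\Bbb{Z}_2\times\F_5$ and $\Bbb{Z}_2\times\F_7$; and the observation that $\{1\}\times R_2$ and $\{2\}\times R_2$ span a $K_{|R_2|,|R_2|}$ in $\T(\Gamma(\Bbb{Z}_3\times R_2))$ combines with Theorem \ref{genera_bounds}(b) to exclude $\Bbb{Z}_3\times R_2$ whenever $|R_2|\ge 5$. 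The five rings in the conclusion are then confirmed toroidal by exhibiting explicit torus embeddings: for $\Bbb{Z}_2\times\F_4$, Lemma \ref{Z2ProductTotalGraph} identifies the total graph as $K_2\times K_4$, and the other four require direct embedding diagrams.

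The main obstacle is the remaining borderline candidates, where the easy complete- and bipartite-subgraph bounds give only $\gamma\ge 1$: the products $\Bbb{Z}_3\times R_2$ with $|R_2|=4$, the products whose two factors both lie in $\{\Bbb{Z}_4,\F_4,\F_2[X]/(X^2),\Bbb{Z}_5,\Bbb{Z}_7\}$, and $\Bbb{Z}_2\times\Bbb{Z}_2\times\Bbb{Z}_3$. For each I would hunt for a denser forbidden substructure such as a $K_{4,5}$, $K_7$, or a vertex-disjoint pair of $K_{3,3}$s, so that Theorem \ref{genera_bounds} forces $\gamma\ge 2$, or else run a tailored Euler-formula argument using the exact vertex count, edge count, and girth of the total graph. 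This case-by-case obstruction analysis, together with the four explicit toroidal embeddings needed for the affirmative rings, is the real technical heart of the proof.
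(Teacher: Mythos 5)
Your part (a) is essentially sound and matches the paper's route (Anderson--Badawi structure theorem, parity of $|Z(R)|$ as a prime power, additivity of genus over components to force $m=3$); in fact you are slightly more explicit than the paper about why $m>3$ cannot occur. The problem is part (b). Your only quantitative restriction there is $|R_1^*|\le 7$, which is strictly weaker than what is needed: it leaves you with roughly thirty candidates, and you concede that about a dozen of them (all $\mathbb{Z}_3\times R_2$ with $|R_2|=4$, all products of two factors from $\{\mathbb{Z}_4,\F_4,\F_2[X]/(X^2),\mathbb{Z}_5,\mathbb{Z}_7\}$, and $\mathbb{Z}_2\times\mathbb{Z}_2\times\mathbb{Z}_3$) are unresolved, to be handled later by ``hunting for a denser forbidden substructure'' or ``a tailored Euler-formula argument.'' That is precisely the content of the theorem, not a proof of it. The tool you are missing is the one the paper leads with: for a graph $G$ of genus $g$ on $\nu$ vertices, $\delta(G)\le 6+(12g-12)/\nu$, so a toroidal graph has $\delta(G)\le 6$; combined with Lemma 1.1 (the total graph is nearly regular of degree $|Z(R)|$ or $|Z(R)|-1$) this gives $|Z(R)|\le 7$ for the whole ring, not just for one ideal. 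Since $|Z(R_1\times\cdots\times R_t)|$ grows quickly (e.g.\ it is $12$ for $\mathbb{Z}_4\times\mathbb{Z}_4$, $10$ for $\mathbb{Z}_2\times\mathbb{Z}_2\times\mathbb{Z}_3$, $9$ for $\mathbb{Z}_5\times\mathbb{Z}_5$), this single bound wipes out almost all of your borderline list at once, leaving only $\F_4\times\F_4$ and $\mathbb{Z}_3\times\F_4$ to be excluded by ad hoc subgraphs (the paper uses Lemma \ref{Z2ProductTotalGraph} for the former and an explicit $K_{5,4}$ subdivision for the latter).

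The affirmative half of part (b) is also only promised, not delivered: you note that four explicit torus embeddings are required but do not produce them, and your identification of $\T(\Gamma(\mathbb{Z}_2\times\F_4))$ with $K_2\times K_4$ does not by itself bound the genus from above (Theorem \ref{genera_bounds}(c) only gives lower bounds for Cartesian products). The paper supplies explicit embeddings for $\mathbb{Z}_2\times\mathbb{Z}_4$, $\mathbb{Z}_2\times\F_4$, and $\mathbb{Z}_3\times\mathbb{Z}_3$, and handles $\mathbb{Z}_2\times\mathbb{Z}_2\times\mathbb{Z}_2$ by recognizing its total graph as $K_{2,2,2,2}$ and citing the known genus of that graph --- a step absent from your outline. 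So while your skeleton is compatible with the paper's, the two steps you defer (eliminating the borderline products and exhibiting the embeddings) are exactly where the work lies, and the first of them would be far harder along your route than along the paper's.
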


\begin{proof}
For any graph $G$ with $\nu$ vertices and genus $g$ we have $\delta(G) \leq 6+(12g-12)/\nu$. If $\gamma(G)=1$, then $\delta(G)\leq 6$ and equality holds if and only if $G$ is a triangulation of the torus and $6$-regular (see \cite[Proposition 2.1]{Wi}). If $R$ is a finite ring with toroidal total graph, then $\delta(\T(\Gamma(R)))\leq 6$, and by Lemma 1.1 $\delta(\T(\Gamma(R)))=|Z(R)|$ or $|Z(R)|-1$. Thus we conclude that $|Z(G)|\leq 7$.

(a) Let $R$ be a local ring. If $2\in Z(R)$, then $\T(\Gamma(R))$ is
a disjoint union of copies of the complete graph $K_{n}$, where
$|Z(R)|=n$. Hence $5\leq n\leq 7$. But in this case $|Z(R)|$ is a power of $2$ and thus
there are no such local rings. Now suppose that
$2\notin Z(R)$. Then $T(\Gamma(R))\cong K_n \cup \left(\frac{m-1}{2}\right)K_{n,n}$, where $n=|Z(R)|$ and $m=|R/Z(R)|$.  Thus $3\leq n \leq 4$ and since $2 \notin Z(R)$ we must have $n=3$. There are two local rings, $\mathbb{Z}_{9}$ and $\mathbb{Z}_{3}[X]/(X^{2})$, such that the
cardinality of the set of the zero-divisors is $3$;
both of these rings have total graph $K_{3}\cup K_{3,3}$ which is toroidal.

(b) Assume that $R$ is not a local ring. Since $|\Z(R)|\leq 7$, we have the following candidates for $R$ by Theorem 1.5(b):
$$
\mathbb{Z}_{2}\times \F_{4}, \mathbb{Z}_{2}\times \mathbb{Z}_{4}, \mathbb{Z}_{2}\times \mathbb{Z}_{2}[x]/(x^2), \mathbb{Z}_{2}\times \mathbb{Z}_{5}, \mathbb{Z}_{3}\times \mathbb{Z}_{3},
\mathbb{Z}_{3} \times \F_{4}, \F_{4}\times \F_{4}, \mathbb{Z}_{2}\times \mathbb{Z}_{2}\times \mathbb{Z}_{2}.
$$
By Theorem \ref{PlanarRings}, $\gamma(\T(\Gamma(\mathbb{Z}_{2}\times \F_{4}))), \gamma(\T(\Gamma(\mathbb{Z}_{2}\times \mathbb{Z}_{4}))),
\gamma(\T(\Gamma(\mathbb{Z}_{3}\times \mathbb{Z}_{3})))$ are all at least $1$.
The embeddings in Figure \ref{Embeddings} parts (a), (b), and (c) show explicitly that
$\gamma(\T(\Gamma(\mathbb{Z}_{2}\times \F_{4})))=\gamma(\T(\Gamma(\mathbb{Z}_{2}\times \mathbb{Z}_{4})))=
\gamma(\T(\Gamma(\mathbb{Z}_{3}\times \mathbb{Z}_{3})))=1$.  Since $\T(\Gamma(\mathbb{Z}_{2}\times \mathbb{Z}_{2}[x]/(x^2))) \cong \T(\Gamma(\mathbb{Z}_{2}\times \mathbb{Z}_{4}))$, then $\gamma(\T(\Gamma(\mathbb{Z}_{2}\times \mathbb{Z}_{2}[x]/(x^2))))=1$.

If we partition the elements of  $\mathbb{Z}_{2}\times \mathbb{Z}_{2}\times \mathbb{Z}_{2}$ by the four sets $V_{1}=\{(0,0,0), (1,1,1)\}$, $V_{2}=\{(1,0,0), (0,1,1)\}$, $V_{3}=\{(0,1,0) ,(1,0,1)\}$, and $V_{4}=\{(0,0,1) ,(1,1,0)\}$, it is clear that $\T(\Gamma(\mathbb{Z}_{2}\times \mathbb{Z}_{2}\times
\mathbb{Z}_{2}))= K_{2,2,2,2}$. Hence $\gamma(\mathbb{Z}_{2}\times \mathbb{Z}_{2}\times
\mathbb{Z}_{2})=1$ by \cite[Corollary 4]{DLC}

By Lemma \ref{Z2ProductTotalGraph}, $\gamma(T(\Gamma(\mathbb{Z}_{2}\times \mathbb{Z}_{5}))) \geq 2$ and thus is not toroidal.
By (the proof of) Lemma \ref{Z2ProductTotalGraph}, $\gamma(T(\Gamma(\F_{4}\times \F_{4}))) \geq 2\gamma(T(\Gamma(\mathbb{Z}_{2} \times \F_{4})))$ and so by Theorem \ref{PlanarRings} $T(\Gamma(\F_{4}\times \F_{4}))$ is not toroidal.

Finally, Figure \ref{Embeddings} part (d) shows a subgraph of $T(\Gamma(\mathbb{Z}_{3}\times \F_{4}))$ that is a subdivision of $K_{5,4}$, and thus by Theorem \ref{genera_bounds} $\gamma(T(\Gamma(\mathbb{Z}_{3}\times \F_{4}))) \geq 2$.
\end{proof}

\vspace{.5cm}

The authors wish to thank the referee for the detailed and useful comments.

\vfill

\pagebreak

\begin{figure}[h]
\label{Embeddings}
\caption{Embeddings in the torus and a subgraph of $T(\Gamma(\mathbb{Z}_{3}\times \F_{4}))$}
\begin{center}
\setlength{\unitlength}{3pt}
\begin{picture}(25,25)(0,0)


\put(-25,-20){\dashbox{1}(30,40)[br]}
\put(-15,20){\circle*{2}}
\put(-5,20){\circle*{2}}
\put(-16,22){$00$}
\put(-6,22){$13$}
\put(-25,10){\circle*{2}}
\put(-15,10){\circle*{2}}
\put(5,10){\circle*{2}}
\put(-30,10){$10$}
\put(-11,10){$12$}
\put(7,10){$10$}
\put(-25,0){\circle*{2}}
\put(-15,0){\circle*{2}}
\put(5,0){\circle*{2}}
\put(-30,0){$11$}
\put(-13,0){$02$}
\put(7,0){$11$}
\put(-25,-10){\circle*{2}}
\put(-5,-10){\circle*{2}}
\put(5,-10){\circle*{2}}
\put(-30,-10){$01$}
\put(-11,-10){$03$}
\put(7,-10){$01$}
\put(-15,-20){\circle*{2}}
\put(-5,-20){\circle*{2}}
\put(-16,-24){$00$}
\put(-6,-24){$13$}
\put(-15,20){\line(0,-1){10}}
\put(-25,10){\line(1,1){10}}
\put(-25,10){\line(0,-1){20}}
\put(-25,10){\line(1,0){10}}
\put(-15,10){\line(1,1){10}}
\put(-15,10){\line(1,1){10}}
\put(-15,10){\line(2,-1){20}}
\put(5,10){\line(-1,1){10}}
\put(5,10){\line(0,-1){20}}
\put(-25,10){\line(1,-1){20}}
\put(-15,0){\line(0,1){10}}
\put(-15,0){\line(-1,-1){10}}
\put(5,0){\line(-1,2){10}}
\put(5,0){\line(0,1){10}}
\put(-25,-10){\line(1,-1){10}}
\put(-15,-20){\line(0,1){20}}
\put(-15,-20){\line(1,1){20}}
\put(-5,-20){\line(0,1){10}}
\put(-5,-10){\line(1,1){10}}
\put(-5,-10){\line(-1,1){10}}
\put(-5,-10){\line(1,0){10}}
\put(-5,-20){\line(1,1){10}}
\put(-30,-30){(a) Embedding of $\mathbb{Z}_{2}\times \mathbb{Z}_{4}$}


\put(25,-20){\dashbox{1}(30,40)[br]}
\put(35,20){\circle*{2}}
\put(45,20){\circle*{2}}
\put(34,22){$00$}
\put(44,22){$1\alpha^2$}
\put(25,10){\circle*{2}}
\put(35,10){\circle*{2}}
\put(55,10){\circle*{2}}
\put(20,10){$10$}
\put(38,10){$1\alpha$}
\put(58,10){$10$}
\put(25,0){\circle*{2}}
\put(35,0){\circle*{2}}
\put(55,0){\circle*{2}}
\put(20,0){$11$}
\put(38,0){$0\alpha$}
\put(58,0){$11$}
\put(25,-10){\circle*{2}}
\put(45,-10){\circle*{2}}
\put(55,-10){\circle*{2}}
\put(20,-10){$01$}
\put(40,-10){$0\alpha^2$}
\put(58,-10){$01$}
\put(35,-20){\circle*{2}}
\put(45,-20){\circle*{2}}
\put(34,-25){$00$}
\put(44,-25){$1\alpha^2$}
\put(25,10){\line(1,1){10}}
\put(25,10){\line(1,0){10}}
\put(25,10){\line(0,-1){10}}
\put(35,10){\line(1,1){10}}
\put(55,10){\line(-1,1){10}}
\put(25,0){\line(1,1){10}}

\put(25,0){\line(0,-1){10}}

\put(35,0){\line(0,1){10}}
\put(55,0){\line(-1,2){10}}
\put(55,0){\line(0,1){10}}
\put(55,0){\line(0,-1){10}}
\put(35,0){\line(-1,-1){10}}
\put(35,0){\line(1,-1){10}}
\put(25,-10){\line(1,-1){10}}
\put(45,-10){\line(1,0){10}}
\put(55,-10){\line(-2,1){20}}
\put(35,-20){\line(0,1){20}}
\put(35,-20){\line(1,1){10}}
\put(45,-10){\line(0,-1){10}}
\put(20,-30){(b) Embedding of $ \mathbb{Z}_{2}\times \F_{4}$}
\end{picture}

\vspace{4cm}


\begin{picture}(25,25)(0,0)
\put(-25,-20){\dashbox{1}(30,40)[br]}
\put(-15,20){\circle*{2}}\put(-15,23){$11$}
\put(-5,20){\circle*{2}}\put(-5,23){$12$}
\put(-25,10){\circle*{2}}\put(-30,10){$22$}
\put(5,10){\circle*{2}}\put(8,10){$22$}
\put(-25,0){\circle*{2}}\put(-30,0){$21$}
\put(-15,0){\circle*{2}}\put(-19,-3){$01$}
\put(-5,0){\circle*{2}}\put(-2,-3){$10$}
\put(5,0){\circle*{2}}\put(8,0){$21$}
\put(-10,-5){\circle*{2}}\put(-20,-10){$02$}
\put(-15,-10){\circle*{2}}\put(-12,-10){$00$}
\put(-5,-10){\circle*{2}}\put(-2,-10){$20$}
\put(-15,-20){\circle*{2}}\put(-16,-25){$11$}
\put(-5,-20){\circle*{2}}\put(-6,-25){$12$}
\put(-15,20){\line(-1,-1){10}}
\put(-15,20){\line(-1,-2){10}}
\put(-15,20){\line(1,0){10}}
\put(-5,20){\line(-1,-1){20}}
\put(-5,20){\line(1,-1){10}}
\put(-5,20){\line(-1,-2){10}}
\put(-25,10){\line(0,-1){10}}
\put(5,10){\line(-2,-1){20}}
\put(5,10){\line(-1,-1){10}}
\put(5,10){\line(0,-1){10}}
\put(-15,0){\line(1,-1){10}}
\put(-5,0){\line(-1,-1){10}}
\put(-15,0){\line(0,-1){20}}
\put(-5,0){\line(0,-1){20}}
\put(-25,0){\line(1,-1){10}}
\put(-5,-10){\line(-1,-1){10}}
\put(-5,0){\line(1,0){10}}
\put(-15,-20){\line(1,0){10}}
\put(-30,-30){(c) Embedding of $\mathbb{Z}_{3}\times \mathbb{Z}_{3}$}


\put(20,20){\circle*{2}}\put(20,23){$10$}
\put(30,20){\circle*{2}}\put(30,23){$11$}
\put(40,20){\circle*{2}}\put(40,23){$1\alpha$}
\put(50,20){\circle*{2}}\put(50,23){$1\alpha^2$}
\put(20,10){\circle*{2}}\put(18,6){$20$}
\put(30,10){\circle*{2}}\put(31,6){$21$}
\put(40,10){\circle*{2}}\put(41,6){$2\alpha$}
\put(50,10){\circle*{2}}\put(51,6){$2\alpha^2$}
\put(30,0){\circle*{2}}\put(32,-2){$01$}
\put(40,0){\circle*{2}}\put(42,-2){$0\alpha$}
\put(50,0){\circle*{2}}\put(52,-2){$0\alpha^2$}
\put(35,-10){\circle*{2}}\put(35,-14){$00$}
\put(20,20){\line(0,-1){10}}
\put(20,20){\line(1,-1){10}}
\put(20,20){\line(2,-1){20}}
\put(20,20){\line(3,-1){30}}
\put(30,20){\line(-1,-1){10}}
\put(30,20){\line(0,-1){10}}
\put(30,20){\line(1,-1){10}}
\put(30,20){\line(2,-1){20}}
\put(40,20){\line(-2,-1){20}}
\put(40,20){\line(-1,-1){10}}
\put(40,20){\line(0,-1){10}}
\put(40,20){\line(1,-1){10}}
\put(50,20){\line(-3,-1){30}}
\put(50,20){\line(-2,-1){20}}
\put(50,20){\line(-1,-1){10}}
\put(50,20){\line(0,-1){10}}
\put(20,10){\line(3,-4){15}}
\put(30,10){\line(0,-1){10}}
\put(40,10){\line(0,-1){10}}
\put(50,10){\line(0,-1){10}}
\put(30,0){\line(1,-2){5}}
\put(40,0){\line(-1,-2){5}}
\put(50,0){\line(-3,-2){15}}
\put(20,-30){(d) A subgraph of $T(\Gamma(\mathbb{Z}_{3}\times \F_{4}))$}
\put(25,-34){that is a subdivision of $K_{5,4}$}
\end{picture}
\end{center}
\end{figure}

\pagebreak


\end{document}